\documentclass{article}

\usepackage[arxiv,final]{optional}

\usepackage{graphicx}
\usepackage{amssymb}
\usepackage{amsmath}
\usepackage{amsthm}
\usepackage{paralist}
\usepackage[T1]{fontenc}
\usepackage{lmodern}

\opt{draft}{
\usepackage{refcheck}
\usepackage{todonotes}
}
\opt{final}{
\newcommand{\todo}[1]{}

}

\usepackage{algorithm}
\usepackage{algorithmic}

\opt{els}{
\usepackage{tikz}
\usepackage{pgfplots}
\pgfplotsset{compat=1.8}
}

\hyphenation{}

\newcommand{\R}{\ensuremath{\mathbb R}}
\newcommand{\C}{\ensuremath{\mathbb C}}

\newcommand{\abs}[1]{\left|#1\right|}

\newcommand{\ones}{\ensuremath{ 1}}





\newcommand{\setcond}{\ensuremath{\mid}}


\DeclareMathOperator{\flatten}{vec}

\DeclareMathOperator{\image}{im}
\DeclareMathOperator{\lspan}{span}
\DeclareMathOperator{\diag}{diag}

\newcommand{\bigo}[1]{\ensuremath{\mathcal{O}\left(#1\right)}}

\newcommand{\defby}{\mathrel{\mathop:}=}

\newcommand{\bydef}{=\mathrel{\mathop:}}

\newcommand{\norm}[1]{\left\lVert #1 \right\rVert}

\newcommand{\twobyone}[2]{\left[\begin{smallmatrix} {#1}\\{#2} \end{smallmatrix}\right]}

\graphicspath{{figures/}}

\newtheorem{theorem}{Theorem}[section]
\newtheorem{lemma}[theorem]{Lemma}

\newtheorem{definition}[theorem]{Definition}
\newtheorem{remark}[theorem]{Remark}
\newtheorem{example}[theorem]{Example}

\opt{els}{
\journal{Linear Algebra and its Applications}
}



\newcommand{\titlestring}{Fast Recovery and Approximation of Hidden
Cauchy Structure}

\newcommand{\authorfnstringa}{TU Berlin, Institut f\"ur Mathematik, MA
4-5, Stra{\ss}e des 17. Juni 136, 10623 Berlin, Germany, {\ttfamily
liesen@math.tu-berlin.de}}

\newcommand{\authorfnstringb}{\'Ecole polytechnique f\'ed\'erale de
Lausanne, SB MATHICSE ANCHP, MA 2 B2 454, Station 8,
CH-1015 Lausanne, Switzerland, {\ttfamily
robert.luce@epfl.ch}}

\begin{document}

\opt{arxiv}{
\title{\titlestring}
\author{J\"org Liesen\thanks{\authorfnstringa} \and
Robert Luce\thanks{\authorfnstringb}}
\maketitle
}

\opt{els}{
\begin{frontmatter}

\title{\titlestring}

\author{J\"org Liesen\fnref{affilfna}}
\author{Robert Luce\fnref{affilfnb}}

\fntext[affilfna]{\authorfnstringa}
\fntext[affilfnb]{\authorfnstringb}

}

\begin{abstract}
We derive an algorithm of optimal complexity which determines whether
a given matrix is a Cauchy matrix, and which exactly recovers the
Cauchy points defining a Cauchy matrix from the matrix entries.
Moreover, we study how to approximate a given matrix by a Cauchy
matrix with a particular focus on the recovery of Cauchy points from
noisy data. We derive an approximation algorithm of optimal
complexity for this task, and prove approximation bounds.  Numerical
examples illustrate our theoretical results.
\end{abstract}

\opt{els}{%
\begin{keyword}
Cauchy matrix, difference matrix, data recovery, linearization, linear
least squares problem, approximation, pseudoinverse
\MSC[2010] 15B05, 65Y20, 65F20
\end{keyword}
\end{frontmatter}
}

\section{Introduction}

Two vectors $s \in \C^m$, $t \in \C^n$ are called \emph{Cauchy
points}, if
\begin{equation*}
    s_i - t_j \neq 0 \quad \text{for all $i,j$}.
\end{equation*}
Such Cauchy points define a
\emph{Cauchy matrix}
\begin{equation*}
    C(s,t) = [c_{ij}] \defby \left[ \frac{1}{s_i - t_j} \right].
\end{equation*}
Cauchy matrices occur in numerous applications. To give just one example,
let $(s_i,z_i)\in\C\times\C$ be given with pairwise distinct values $s_1,\dots,s_n$
and let $t_1,\dots,t_n\in \C$ be given with $s_i\neq t_j$ for all $i,j$. Then the
coefficients $a=[a_1,\dots,a_n]^T\in\C^n$ such that the rational function
\begin{equation*}
    r(\zeta) = \sum_{j=1}^n \frac{a_j}{\zeta - t_j}
\end{equation*}
satisfies $r(s_i)=z_i$, $i=1,\dots,n$, can be found by solving the
linear system
\begin{equation*}
    C(s,t) a = z.
\end{equation*}
Note that the condition $s_i- t_j\neq 0$ for the Cauchy points appears naturally in this
application (as in many others) by the requirement that the poles of the rational
function $r(\zeta)$ must be distinct from the points where the
(finite) values of $r(\zeta)$
are prescribed.

A Cauchy matrix satisfies the Sylvester type displacement equation
\begin{equation*}
    SC(s,t)-C(s,t)T=\ones_m\ones_n^T,
\end{equation*}
where $S \defby \diag(s) \in
\C^{m,m}$, $T \defby \diag(t) \in \C^{n,n}$, and $\ones_m\defby
[1,\dots,1]^T\in\R^m$. Hence the \emph{$\{S,T\}$-displacement rank} of
$C(s,t)$ is equal to $1$. The concept of displacement rank was
originally introduced in~\cite{FriedlanderEtAl1979,KailathKungMorf1979};
see~\cite[Section~12.1]{GolubVL2013} for an introduction.
Due to this special structure, several fast algorithms exist for
performing matrix computations with $C(s,t)$. For example, an $LU$
decomposition of $C(s,t)$ with partial pivoting can be computed in
$\bigo{mn}$ operations~\cite{GKO1995} (the GKO algorithm), and
matrix-vector products with $C(s,t)$ can be computed very
fast~\cite{GreengardRokhlin1987} (the fast multipole method); see
also~\cite{GohbergOlshevsky1994} and~\cite[Section~3.6]{Pan2001}.

In this work we are, however, not concerned with performing
computations with Cauchy matrices.  Rather we study the problem of
determining whether a given matrix $A \in \C^{m,n}$ is equal or at
least ``close'' to a Cauchy matrix. For such matrices we derive
algorithms of \textit{optimal complexity} that compute Cauchy points
$s \in \C^m$, $t \in \C^n$ with $A=C(s,t)$ when $A$ is a Cauchy
matrix, or with $A \approx C(s,t)$ when certain conditions are
satisfied. We are not aware that a similar study has appeared in the
literature before.

This cheap recognition (and approximation) could possibly be useful in
black-box linear system solvers: Instead of using a general purpose
method, one could first run the proposed algorithms in order to
determine whether the given matrix is close to a Cauchy matrix,
and then solve the system with a specialized algorithm. The upfront
test runs in time proportional to the size of the input, and hence
the computational overhead is negligible.

Let us briefly describe our general approach and the outline of this paper.
When $A = [a_{ij}] = C(s,t)$ is a Cauchy matrix, but the corresponding
Cauchy points $s,t$ are unknown, these can be computed by solving
the $mn$ nonlinear equations (in $m+n$ variables)
\begin{equation}
\label{eqn:nonlinear}
    \frac{1}{s_i - t_j} = a_{ij}, \quad 1 \le i \le m, \; 1 \le j \le n.
\end{equation}
For the Cauchy matrix $A$ we have $a_{ij} \neq 0$, and hence the equations
\eqref{eqn:nonlinear} are \emph{equivalent} to the $nm$ linear equations
(in $m+n$ variables)
\begin{equation}
\label{eqn:linear}
s_i - t_j = \frac{1}{a_{ij}},  \quad 1 \le i \le m, \; 1 \le j \le n.
\end{equation}
In Section~\ref{sec:exact} we discuss the
\emph{linearization}~\eqref{eqn:linear} of the equations
\eqref{eqn:nonlinear} in more detail, study uniqueness properties of
its solution and derive an algorithm for
solving~\eqref{eqn:linear} in $\bigo{m+n}$ operations.

If the given matrix $A=[a_{ij}]$ is not a Cauchy matrix, and the task is to
\emph{approximate} $A$ with a Cauchy matrix, one would ideally like
to solve the nonlinear optimization problem
\begin{equation}
\label{eqn:nonconvex}
    \min_{s, t} \sum_{i=1}^m \sum_{j=1}^n
        \abs{ \frac{1}{s_i - t_j} - a_{ij} }^2 \,
        = \, \min_{s, t} \norm{C(s,t)-A}_F^2.
\end{equation}
Instead of solving~\eqref{eqn:nonconvex}, we consider the linear least
squares problem
\begin{equation}
\label{eqn:convex}
    \min_{s, t} \sum_{i=1}^m \sum_{j=1}^n
        \abs{ s_i - t_j - \frac{1}{a_{ij}} }^2 \,
        =\, \min_{s, t} \|D(s,t)-A^{[-1]}\|_F^2,
\end{equation}
where
\begin{equation*}
    A^{[-1]}\defby [a_{ij}^{-1}], \quad D(s,t):=[s_i-t_j]\in\C^{m,n}.
\end{equation*}
The problem \eqref{eqn:convex} can be considered a
\textit{linearization} of the nonlinear problem \eqref{eqn:nonconvex}.
We first show in Section~\ref{sec:algorithm} how to
solve~\eqref{eqn:convex} in $\bigo{nm}$ operations. In
Section~\ref{sec:bounds} we relate the solutions obtained
from~\eqref{eqn:convex} to solutions of the original
problem~\eqref{eqn:nonconvex}.  In particular, we analyze when a
solution of~\eqref{eqn:convex} delivers a good approximation to the
Cauchy points of a ``noisy'' Cauchy matrix $A = C(s,t) + N$, where the
matrix $N$ represents some data error. We illustrate our results by
numerical experiments in Section~\ref{sec:examples}. 
Concluding remarks are given in Section~\ref{sec:conclusion}.

\paragraph*{Notation}

The vector (matrix) of all ones in $\C^n$ ($\C^{m,n}$) is denoted
by $\ones_n$ ($\ones_{m,n}$). For a matrix $A=[a_{ij}] \in \C^{m,n}$,
\begin{equation*}
    \norm{A}_F = \left({\sum_{i,j} \abs{a_{ij}}^2}\right)^{1/2}
    \quad \mbox{and} \quad
    \norm{A}_M = \max_{i,j} \abs{a_{ij}}
\end{equation*}
denote its Frobenius and maximum norm, respectively. Provided that all
the entries of $A$ are nonzero, its elementwise inverse is
$A^{[-1]}\defby[a_{ij}^{-1}]$, and $A^{[-T]}\defby (A^{[-1]})^T$.  For
two matrices $A,B$ of appropriate sizes we denote by $A \odot B$ and
$A \otimes B$ their Hadamard (elementwise) and Kronecker products,
respectively. Finally, $\flatten(A) \in \C^{mn}$ denotes the vector
resulting from stacking all the columns of $A \in \C^{m,n}$ upon
another.

\section{Exact recovery of Cauchy points}
\label{sec:exact}

Let $A=[a_{ij}]\in\C^{m,n}$ with $a_{ij}\neq 0$ for all $i,j$ be given.
There exist Cauchy points $s\in \C^m$, $t \in \C^n$ with $A=C(s,t)$,
i.e., $A$ is a Cauchy matrix, if and only if the equations \eqref{eqn:nonlinear}
hold. Since $a_{ij}\neq 0$ for all $i,j$, the equations \eqref{eqn:nonlinear}
are equivalent with the equations~\eqref{eqn:linear}, and these can be written
in matrix form as
\begin{equation}
\label{eqn:cauchy_system}
U\begin{bmatrix}
  s\\t
 \end{bmatrix}
\,=\,b,
\end{equation}
where
\begin{equation}
\label{eqn:matrixwise}
    U \defby
    \begin{bmatrix}
        I_m \otimes \ones_n & -\ones_m \otimes I_n
    \end{bmatrix}\in\C^{mn,(m+n)},
    \quad
    b \defby \flatten(A^{[-T]})\in\C^{mn}.
\end{equation}
%
Using the (overdetermined) linear system
\eqref{eqn:cauchy_system}--\eqref{eqn:matrixwise} we can test whether
a given matrix $A=[a_{ij}]$ with $a_{ij}\neq 0$ for all $i,j$ is a
Cauchy matrix or not:

If $\twobyone{s}{t}$ solves \eqref{eqn:cauchy_system}--\eqref{eqn:matrixwise}
for a componentwise nonzero right hand side $b$, then $s_i - t_j \neq 0$
for all $i,j$ (cf.~\eqref{eqn:linear}), so that $s,t$ are Cauchy points and
$A = C(s,t)$. On the other hand, there are, of course, matrices $A$ with all
entries nonzero, giving a componentwise nonzero $b$, for which no
solution of \eqref{eqn:cauchy_system}--\eqref{eqn:matrixwise} exists.

\begin{example}
For $A=\left[\begin{smallmatrix}1 & -1\\-1 & 1\end{smallmatrix}\right]$ we have
\begin{equation*}
U=
\begin{bmatrix}
    1 & 0 & -1 & 0\\
    1 & 0 &  0 & -1\\
    0 & 1 & -1 & 0\\
    0 & 1 &  0 & -1
\end{bmatrix},
\quad
b = \flatten(A^{[-T]}) =
\begin{bmatrix}
   1 \\ -1 \\ -1 \\1
\end{bmatrix},
\end{equation*}
and a simple computation shows that there exists no solution of $U
\twobyone{s}{t} = b$.  Hence $A$ is not a Cauchy matrix.
\end{example}

If $s\in\C^m$, $t\in\C^n$ are Cauchy points, then
\begin{equation*}
    C(s,t) = C(s + \alpha \ones_m, t + \alpha \ones_n)
\end{equation*}
for all $\alpha \in \C$.  Consequently, the Cauchy points $s,t$ of a
Cauchy matrix $A = [a_{ij}]$ are not uniquely determined by the values
$a_{ij}$.  We will show next that this global translation of the
Cauchy points is the only source of ambiguity.

\begin{theorem}
\label{thm:uniqueness}
The matrix $U$ in~\eqref{eqn:matrixwise} satisfies $\ker(U) = \lspan\{ \ones_{m+n} \}$.
Thus, if $\left[\begin{smallmatrix} s\\t\end{smallmatrix}\right]$ is a solution
of \eqref{eqn:cauchy_system}--\eqref{eqn:matrixwise}, then the set of all solutions
is given by
\begin{equation*}
    \left\{\left[ \begin{smallmatrix} s\\t \end{smallmatrix} \right]
        + \alpha \ones_{m+n} \setcond \alpha \in \C \right\}.
\end{equation*}
\end{theorem}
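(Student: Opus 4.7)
The plan is to identify the kernel of $U$ directly by unpacking what the linear map does. Applying $U$ to a stacked vector $\twobyone{s}{t}$ with $s\in\C^m$ and $t\in\C^n$ produces the vector whose $(i,j)$-th entry (under the column-stacking convention) equals $s_i - t_j$; in other words, $U\twobyone{s}{t} = \flatten(D(s,t)^T)$ with $D(s,t) = [s_i - t_j]$.

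From this observation the kernel characterization is essentially immediate. First I would verify the easy inclusion $\lspan\{\ones_{m+n}\} \subseteq \ker(U)$ by noting that taking $s = \alpha\ones_m$ and $t = \alpha\ones_n$ yields $s_i - t_j = 0$ for all $i,j$. For the reverse inclusion, suppose $U\twobyone{s}{t} = 0$, so $s_i = t_j$ for every pair $(i,j)$. Fixing any particular index $j_0$ forces all $s_i$ to equal the common value $t_{j_0}$, and symmetrically fixing any $i_0$ forces all $t_j$ to equal $s_{i_0}$; hence there is a single scalar $\alpha\in\C$ with $s = \alpha\ones_m$ and $t = \alpha\ones_n$, i.e., $\twobyone{s}{t} = \alpha\ones_{m+n}$.

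The second assertion is then a routine consequence of affine structure of solution sets to linear systems: if $\twobyone{s}{t}$ is any particular solution of \eqref{eqn:cauchy_system}--\eqref{eqn:matrixwise}, then the full solution set equals $\twobyone{s}{t} + \ker(U) = \{\twobyone{s}{t} + \alpha\ones_{m+n} \setcond \alpha\in\C\}$.

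There is essentially no obstacle here; the only thing to be slightly careful about is matching conventions, since the right-hand side is defined as $\flatten(A^{[-T]})$ rather than $\flatten(A^{[-1]})$, so one must be consistent about whether $U\twobyone{s}{t}$ corresponds to row-major or column-major enumeration of the pairs $(i,j)$. Once the indexing is pinned down, the argument is just the two-line observation that $s_i - t_j \equiv 0$ forces $s$ and $t$ to be constant vectors with the same constant.
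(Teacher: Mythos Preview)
Your proposal is correct and follows essentially the same argument as the paper: both verify $\ones_{m+n}\in\ker(U)$ directly and then, for the reverse inclusion, use that $s_i-t_j=0$ for all $i,j$ forces $s$ and $t$ to be constant vectors with the same constant. The paper phrases this as $x_j\ones_n=y$ for each $j$, which is exactly your ``fix an index'' step, and the affine solution-set remark is identical.
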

\begin{proof}
Since $U\ones_{m+n}=0$ we have  $\lspan\{ \ones_{m+n} \}\subseteq \ker(U)$.
If $z=\left[\begin{smallmatrix} x\\y\end{smallmatrix}\right]\in \ker(U)$ with
$x\in\C^m$ and $y\in\C^n$, then
\begin{equation*}
    x_j \ones_n = y, \quad j=1,\dotsc,m.
\end{equation*}
In particular, $y=x_1\ones_n$, which implies $x_j=x_1$ for $j=2,\dots,m$,
so that $\twobyone{x}{y} = x_1 \ones_{m+n}$, giving that $ \ker(U)
\subseteq \lspan\{ \ones_{m+n} \}$.
\end{proof}

In order to remove the ambiguity about the possible Cauchy points that
define a given Cauchy matrix we introduce the following definition.

\begin{definition}
\label{def:normalized_cp}
Let $A \in \C^{m,n}$ be a Cauchy matrix.  We say that $\tilde{s} \in
\C^m$, $\tilde{t} \in \C^n$ are \emph{normalized Cauchy points for
$A$}, if $A=C(\tilde{s},\tilde{t})$ and
$\norm{\twobyone{\tilde{s}}{\tilde{t}}}_2$
is minimal among all possible Cauchy points
$s\in\C^m$, $t\in\C^n$ with $A=C(s,t)$.
\end{definition}

If $A=C(s,t)$, then normalized Cauchy points
for $A$ can be found by solving the minimization problem
\begin{equation*}
    \min_{\alpha \in \C} \norm{\twobyone{s}{t} - \alpha
\ones_{m+n}}_2^2.
\end{equation*}
The unique solution is given by
\begin{equation*}
    \alpha_* \defby
    \frac{\ones_{m+n}^T \twobyone{s}{t}}{\ones_{m+n}^T\ones_{m+n}}
    = \frac{\ones_{m+n}^T \twobyone{s}{t}}{m+n},
\end{equation*}
and hence $\tilde{s},\tilde{t}$ with
\begin{equation*}
\begin{bmatrix}
    \tilde{s}\\
    \tilde{t}
\end{bmatrix}
\defby
\begin{bmatrix}
    s\\
    t
\end{bmatrix}
-\alpha_*\ones_{m+n}
\end{equation*}
are normalized Cauchy points for $A$.

\begin{algorithm}[t]
\caption{Optimal recovery of normalized Cauchy points}
\label{alg:exact}
\begin{algorithmic}[1]
\REQUIRE Cauchy matrix $A=[a_{ij}] \in \C^{m,n}$.
(Thus, $a_{ij} \neq 0$ for all $i,j$.)
\ENSURE Normalized Cauchy points $\tilde{s}, \tilde{t}$ such
that $A = C(\tilde{s},\tilde{t})$.
\medskip
\STATE $s(1) \leftarrow 0$
    \quad \COMMENT{Choice arbitrary}
\STATE $t(1:n) \leftarrow s(1) - A(1,1:n)^{[-1]}$
\STATE $s(2:m) \leftarrow t(1) + A(2:m,1)^{[-1]}$
\STATE $\alpha_* \leftarrow \tfrac{1}{m+n} (\sum s_i + \sum t_j)$
\STATE $\tilde{s} \leftarrow s - \alpha_* \ones_m$
\STATE $\tilde{t} \leftarrow t - \alpha_* \ones_n$
\end{algorithmic}
\end{algorithm}

As described above, if $A$ is a Cauchy matrix, then Cauchy
points for $A$ can be computed by solving the system
\eqref{eqn:cauchy_system}--\eqref{eqn:matrixwise}.  Since the matrix
$U$ has rank $m+n-1$ (cf. Theorem~\ref{thm:uniqueness}), the points
can be computed by solving any full-rank subsystem of
\eqref{eqn:cauchy_system}--\eqref{eqn:matrixwise} with $m+n-1$ rows.
Due to the simple structure of $U$, the solution of this subsystem can
be computed in $\bigo{m+n}$ operations. One possible algorithm is
shown in Algorithm~\ref{alg:exact}. At the end of the algorithm we
normalize the computed Cauchy points (according to
Definition~\ref{def:normalized_cp}), which can be achieved in
$\bigo{m+n}$ operations as well. Note that only the first row and
column of $A$ are accessed by the algorithm.

If we do not know whether $A$ is a Cauchy matrix, we can still apply
Algorithm~\ref{alg:exact} to $A$. Since the algorithm only considers the
first row and column of $A$, it then costs (at most) $mn$ operations to check
whether indeed $A=C(\tilde{s},\tilde{t})$.

We summarize these observations in the following result.

\begin{theorem}
If $A \in \C^{m,n}$ is a Cauchy matrix, then Algorithm~\ref{alg:exact} yields
normalized Cauchy points $\tilde{s} \in \C^m$, $\tilde{t} \in \C^n$ with
$A=C(\tilde{s},\tilde{t})$ in $\bigo{m+n}$ operations. Moreover, for any matrix
$A \in \C^{m,n}$ it can be decided in $\bigo{mn}$ operations whether $A$ is a
Cauchy matrix.
\end{theorem}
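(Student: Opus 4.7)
The plan is to split the theorem into its two claims and treat them separately. For the first claim, I would begin by checking correctness of Algorithm~\ref{alg:exact} under the hypothesis that $A = C(s,t)$. Line~1 fixes the single degree of freedom left open by Theorem~\ref{thm:uniqueness} by prescribing $s_1 = 0$. Lines~2 and~3 then solve exactly the $n + (m-1) = m+n-1$ equations of~\eqref{eqn:linear} corresponding to the first row and the first column of $A$: line~2 forces $s_1 - t_j = 1/a_{1j}$ for $j=1,\dots,n$, and line~3 forces $s_i - t_1 = 1/a_{i1}$ for $i=2,\dots,m$. The key substantive point is that this chosen subsystem has full rank $m+n-1$: its kernel is contained in $\ker(U) = \lspan\{\ones_{m+n}\}$ by Theorem~\ref{thm:uniqueness}, and a direct inspection shows that the only multiple of $\ones_{m+n}$ that the prescription $s_1 = 0$ admits is $\alpha = 0$. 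Thus the computed vector $\twobyone{s}{t}$ is the unique solution of the full Cauchy system~\eqref{eqn:cauchy_system}--\eqref{eqn:matrixwise} with $s_1 = 0$, and in particular satisfies $A = C(s,t)$. Lines~4--6 then subtract the average $\alpha_*$ derived in the discussion preceding the algorithm, producing by construction the normalized Cauchy points of Definition~\ref{def:normalized_cp}.

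Next I would account the operations: line~1 is trivial, line~2 performs $n$ subtractions and reciprocals, line~3 performs $m-1$ of the same, line~4 is a sum over $m+n$ terms, and lines~5 and~6 each perform $m$ respectively $n$ subtractions. The total is $O(m+n)$, as claimed. Note that only the first row and the first column of $A$ are ever read.

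For the second claim, I would describe the obvious test: given an arbitrary $A \in \C^{m,n}$, first scan its entries in $O(mn)$ time to verify that $a_{ij} \neq 0$ for all $i,j$ (if any entry vanishes, $A$ cannot be a Cauchy matrix and we are done). Then run Algorithm~\ref{alg:exact}, which costs $O(m+n)$ and produces candidate points $\tilde{s}, \tilde{t}$. Finally, compute $C(\tilde{s}, \tilde{t})$ and compare entrywise with $A$ in $O(mn)$ operations; equivalently, check that $\tilde{s}_i - \tilde{t}_j = 1/a_{ij}$ for every pair $(i,j)$. By the first part of the theorem, this verification succeeds if and only if $A$ is a Cauchy matrix, and the total cost is $O(mn)$. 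The only mildly delicate step in the whole argument is the rank assertion for the subsystem selected in lines~2--3, but Theorem~\ref{thm:uniqueness} reduces it to a one-line check, so I do not anticipate any real obstacle.
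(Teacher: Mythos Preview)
Your proposal is essentially correct and follows the same route as the paper, which does not give a separate formal proof but presents the theorem as a summary of the preceding discussion (the rank of $U$, the structure of Algorithm~\ref{alg:exact}, and the $\bigo{mn}$ verification step).

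One small logical slip is worth flagging. You write that the kernel of the chosen $(m+n-1)$-row subsystem ``is contained in $\ker(U) = \lspan\{\ones_{m+n}\}$ by Theorem~\ref{thm:uniqueness}''. But deleting rows from $U$ can only \emph{enlarge} the kernel, so this containment does not follow from Theorem~\ref{thm:uniqueness}; that theorem only gives you $\ker(U)\subseteq\ker(U')$ for the submatrix $U'$. What you actually need is the reverse inclusion, and it must be checked directly: the homogeneous equations $s_1 - t_j = 0$ ($j=1,\dots,n$) and $s_i - t_1 = 0$ ($i=2,\dots,m$) immediately force all coordinates to equal $s_1$, so $\ker(U') = \lspan\{\ones_{m+n}\}$ as well. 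Combined with the normalization $s_1 = 0$, this pins down the unique solution. With that one-line fix, your argument is complete.
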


Note that neither the recovery of Cauchy points, nor recognizing Cauchy structure
can be achieved asymptotically faster than stated in this theorem.

\section{Approximation with Cauchy matrices}

In order to (best) approximate a given matrix $A \in \C^{m,n}$ (having
only nonzero entries) by a Cauchy matrix, we would ideally like to solve
the nonlinear optimization problem \eqref{eqn:nonconvex}.  As
described in the Introduction, we will instead solve the
\textit{linearization} of this problem given by \eqref{eqn:convex}.
Using the notation of Section~\ref{sec:exact}, this standard linear
least squares problem can be equivalently written as (cf.
\eqref{eqn:cauchy_system}--\eqref{eqn:matrixwise})
\begin{equation}
\label{eqn:lin_vector}
    \min_{ s, t} \norm{ U \twobyone{s}{t} - b}_2^2.
\end{equation}

Algorithm~\ref{alg:exact} from Section~\ref{sec:exact} is clearly
inappropriate in this context, as there is no guarantee that the
submatrix of $U$ picked for the reconstruction of the Cauchy points
yields any useful \textit{global} approximation of the given data when
$A$ is not a Cauchy matrix. Our main goal in
Section~\ref{sec:algorithm} is to derive an algorithm of optimal
complexity $\bigo{mn}$ for solving~\eqref{eqn:lin_vector}.  In
Section~\ref{sec:bounds} we relate the (optimal) solution obtained by
this algorithm to the original problem~\eqref{eqn:nonconvex}.

\subsection{Fast solution of the least squares problem}
\label{sec:algorithm}

We will solve the least squares problem \eqref{eqn:lin_vector} using the
singular value decomposition of the matrix $U$. We have already characterized
the kernel of $U$ in Theorem~\ref{thm:uniqueness}.  The following result
gives a complete characterization of the nonzero singular values and
corresponding singular vectors.

\begin{lemma}
\label{lem:sv_char}
The nonzero singular values of the matrix $U$ in \eqref{eqn:matrixwise} are
\begin{align*}
    \sqrt{m+n} & \quad \text{(of multiplicity one)},\\
    \sqrt{m}   & \quad \text{(of multiplicity $n-1$),}\\
    \sqrt{n}   & \quad \text{(of multiplicity $m-1$)}.
\end{align*}
Moreover, the corresponding right singular vectors can be characterized
as
\begin{align*}
\sqrt{m+n}: \quad
& \lspan\left\{
    \begin{bmatrix}
        \sqrt{\tfrac{n}{m}} \ones_{m}\\
        -\sqrt{\tfrac{m}{n}} \ones_n
    \end{bmatrix}
\right\},\\
\sqrt{m}: \quad
& \lspan\left\{
    \begin{bmatrix}
        0_m\\ v
    \end{bmatrix}
    \setcond
    v \in \C^n, \ones_n^Tv = 0
\right\},\\
\sqrt{n}: \quad
& \lspan\left\{
    \begin{bmatrix}
        v\\ 0_n
    \end{bmatrix}
    \setcond
    v \in \C^m, \ones_m^Tv = 0
\right\},
\intertext{and the corresponding left singular vectors can be characterized as}
\sqrt{m+n}: \quad & \lspan\left\{ \ones_{mn} \right\},\\
\sqrt{m}: \quad & \lspan \left\{ \ones_m \otimes v \setcond v \in
\C^n, \ones_n^T v = 0 \right\},\\
\sqrt{n}: \quad & \lspan \left\{ v \otimes \ones_n \setcond v \in
\C^m, \ones_m^T v = 0 \right\}.
\end{align*}
\end{lemma}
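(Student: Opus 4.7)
The plan is to diagonalize the Gramian $U^T U$, read off the nonzero singular values as square roots of the nonzero eigenvalues, and then obtain the left singular vectors by applying $U$ to the right ones. Using the mixed product rule for Kronecker products a short calculation yields
\begin{equation*}
U^T U = \begin{bmatrix} n I_m & -\ones_m \ones_n^T \\ -\ones_n \ones_m^T & m I_n \end{bmatrix}.
\end{equation*}

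The key observation is that $\C^{m+n}$ decomposes as an orthogonal direct sum of three subspaces invariant under $U^T U$: the spaces $W_1 = \{\twobyone{x}{0}\setcond \ones_m^T x = 0\}$ of dimension $m-1$, $W_2 = \{\twobyone{0}{y}\setcond \ones_n^T y = 0\}$ of dimension $n-1$, and $W_3 = \lspan\{\twobyone{\ones_m}{0},\twobyone{0}{\ones_n}\}$ of dimension $2$, whose dimensions sum to $m+n$. On $W_1$ the off-diagonal blocks annihilate $\twobyone{x}{0}$ because $\ones_m^T x = 0$, so $U^T U$ restricts to $nI$ and every nonzero vector in $W_1$ is an eigenvector with eigenvalue $n$; the analogous argument on $W_2$ gives eigenvalue $m$. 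Restricting $U^T U$ to $W_3$ and expressing it in the orthonormal basis $e_1 = \twobyone{\ones_m}{0}/\sqrt{m}$, $e_2 = \twobyone{0}{\ones_n}/\sqrt{n}$ produces the $2 \times 2$ matrix $\left[\begin{smallmatrix} n & -\sqrt{mn}\\ -\sqrt{mn} & m \end{smallmatrix}\right]$, whose trace and determinant are $m+n$ and $0$. Hence its eigenvalues are $0$ and $m+n$, and a short calculation identifies the $0$-eigenvector with $\ones_{m+n}$ (consistent with Theorem~\ref{thm:uniqueness}) and the $(m+n)$-eigenvector with a scalar multiple of $\twobyone{\sqrt{n/m}\,\ones_m}{-\sqrt{m/n}\,\ones_n}$. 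Taking square roots of the nonzero eigenvalues delivers the claimed singular values with their multiplicities and the asserted right singular vector spans.

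For the left singular vectors I would apply $U$ to each right singular vector and use $U v = \sigma u$. A direct computation gives $U\twobyone{\sqrt{n/m}\,\ones_m}{-\sqrt{m/n}\,\ones_n} = \tfrac{m+n}{\sqrt{mn}}\ones_{mn}$, so the $\sqrt{m+n}$ left singular subspace is $\lspan\{\ones_{mn}\}$. For $\twobyone{0}{v}$ with $\ones_n^T v = 0$, the mixed product rule yields $U\twobyone{0}{v} = -\ones_m \otimes v$, and for $\twobyone{v}{0}$ with $\ones_m^T v = 0$ it gives $U\twobyone{v}{0} = v \otimes \ones_n$, matching the left singular subspaces stated in the lemma. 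No step is conceptually difficult; the only care required is in the Kronecker bookkeeping and in checking that the eigenvector of the $2\times 2$ block is scaled so as to reproduce the form $\sqrt{n/m}$, $-\sqrt{m/n}$ appearing in the statement.
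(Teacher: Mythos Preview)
Your proof is correct and follows essentially the same route as the paper: compute the Gramian $U^TU$ and read off the eigenstructure via the natural block decomposition of $\C^{m+n}$. The only minor deviation is that the paper suggests verifying the left singular vectors by computing $UU^T = I_m\otimes\ones_{n,n}+\ones_{m,m}\otimes I_n$ directly, whereas you obtain them by applying $U$ to the right singular vectors; your approach has the slight advantage of automatically pairing left and right vectors, but both are equally valid.
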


\begin{proof}
The claims can be verified by straightforward computations using the matrix
\begin{equation*}
    U^TU =
    \begin{bmatrix}
        n I_m & - \ones_{m,n}\\
        - \ones_{n,m} & m I_n
    \end{bmatrix}
\end{equation*}
for the right singular vectors, and the matrix
\begin{equation*}
    UU^T = I_m\otimes \ones_{n,n} + \ones_{m,m} \otimes I_n
\end{equation*}
for the left singular vectors.
\end{proof}

The next theorem gives an explicit formula for the solution
of~\eqref{eqn:lin_vector}, which in particular shows that
this solution can be computed fast. We denote the Moore-Penrose
pseudoinverse of $U$ by $U^+$.

\begin{theorem}
\label{thm:fast_ls_sol}
Let $A \in \C^{m,n}$ have only nonzero entries. Let
$b \defby \flatten(A^{[-T]})$ and
\begin{equation*}
    r \defby \frac{1}{n} A^{[-1]} \ones_n \in \C^m, \quad
    c \defby \frac{1}{m} A^{[-T]} \ones_m \in \C^n, \quad
    \sigma \defby \frac{1}{mn} \ones_m^T A^{[-1]} \ones_n.
\end{equation*}
Then the minumum norm solution of $\min_{s,t} \norm{U\twobyone{s}{t} -
b}_2$ has the form
\begin{equation}\label{eqn:Upb}
    U^+ b
    =
    \begin{bmatrix}
        r -   \tfrac{m \sigma}{m+n} \ones_m\\
        - c + \frac{n \sigma}{m+n}  \ones_n
    \end{bmatrix},
\end{equation}
which can be computed in $\bigo{mn}$ operations.
Moreover, $U^+b$ yields Cauchy points if and only if
\begin{equation}
\label{eqn:UUplus}
    U U^+b \neq 0 \quad \text{(componentwise)},
\end{equation}
or, equivalently,
\begin{equation}
\label{eqn:means_cond}
    r_i + c_j \neq \sigma \quad \text{for all $i,j$}.
\end{equation}
\end{theorem}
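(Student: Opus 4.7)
The plan is to obtain $U^+b$ by solving the normal equations together with the minimum-norm condition coming from Theorem~\ref{thm:uniqueness}. First I would compute $U^Tb$ using the Kronecker/vec identities on $b=\flatten(A^{[-T]})$: the block $(I_m\otimes\ones_n)^T b$ sums each row of $A^{[-1]}$, yielding $nr$, while $(\ones_m\otimes I_n)^T b$ sums each column, yielding $mc$, so that $U^Tb = \twobyone{nr}{-mc}$. Combined with the explicit expression for $U^TU$ already derived in the proof of Lemma~\ref{lem:sv_char}, the normal equations $U^TU\twobyone{s}{t}=U^Tb$ then read
\begin{align*}
    n\,s - (\ones_n^T t)\,\ones_m &= n\,r,\\
    -(\ones_m^T s)\,\ones_n + m\,t &= -m\,c,
\end{align*}
so that with the abbreviations $\rho \defby \tfrac{1}{m}\ones_m^Ts$ and $\tau\defby\tfrac{1}{n}\ones_n^T t$ one immediately obtains $s = r+\tau\,\ones_m$ and $t = \rho\,\ones_n - c$.

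To pin down the two scalars $\rho,\tau$ I would invoke the minimum-norm characterization: since $\ker(U)=\lspan\{\ones_{m+n}\}$ by Theorem~\ref{thm:uniqueness}, $U^+b$ is the unique solution of the normal equations satisfying $m\rho+n\tau = \ones_m^Ts+\ones_n^Tt = 0$. Averaging the two block equations above (i.e.\ taking $\tfrac{1}{m}\ones_m^T$ of the first and $\tfrac{1}{n}\ones_n^T$ of the second, using $\ones_m^T r = m\sigma$ and $\ones_n^T c = n\sigma$) yields the additional scalar relation $\rho-\tau=\sigma$. The linear system $\rho-\tau=\sigma$, $m\rho+n\tau=0$ is solved by $\rho = n\sigma/(m+n)$ and $\tau=-m\sigma/(m+n)$, and plugging these back into $s,t$ produces exactly the closed form~\eqref{eqn:Upb}. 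The $\bigo{mn}$ cost is then immediate: one pass through $A^{[-1]}$ produces $r$, $c$, and $\sigma$, after which~\eqref{eqn:Upb} is assembled in $\bigo{m+n}$ additional flops.

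For the final equivalence, I would note that $U^+b$ yields Cauchy points precisely when $s_i-t_j\neq 0$ for every pair $i,j$, which is literally the componentwise nonvanishing of $U\cdot U^+b$, i.e.~\eqref{eqn:UUplus}. Substituting the closed-form expressions for $s,t$ gives $s_i - t_j = r_i + c_j - \sigma$, so~\eqref{eqn:UUplus} reduces to~\eqref{eqn:means_cond}. The only genuinely delicate step is the vec/Kronecker bookkeeping in the first paragraph, where one must be careful to identify whether row or column sums of $A^{[-1]}$ appear in each block of $U^Tb$; once this is straight, everything else is a short linear-algebraic cascade that does not rely on the full SVD from Lemma~\ref{lem:sv_char}, only on $U^TU$ and $\ker(U)$.
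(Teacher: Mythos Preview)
Your argument is correct and complete. It differs from the paper's proof in a genuinely useful way: rather than invoking the full singular value decomposition from Lemma~\ref{lem:sv_char} and computing $U^+ = W\Lambda^+ W^T U^T$ through the explicit orthogonal matrix $W$ (and the identity $Q_mQ_m^T = I_m - \tfrac{1}{m}\ones_{m,m}$), you characterize $U^+b$ directly as the unique solution of the normal equations $U^TU\twobyone{s}{t}=U^Tb$ lying in $\ker(U)^\perp$. This needs only the block form of $U^TU$ and the one-dimensional kernel from Theorem~\ref{thm:uniqueness}, both of which are already available, and reduces the computation to a transparent $2\times 2$ scalar system in the averages $\rho,\tau$. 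The paper's SVD route has the side benefit that the extremal singular values $\sqrt{m+n}$ and $\min\{\sqrt{m},\sqrt{n}\}$ are later reused in the perturbation bound of Theorem~\ref{thm:cauchy_err}; your approach is more elementary and self-contained for the present statement, but would still have to appeal to Lemma~\ref{lem:sv_char} (or an equivalent eigenvalue argument) when those norms are needed downstream.
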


\begin{proof}
For an integer $k \ge 1$, we denote by $Q_k \in \R^{k,k-1}$ a matrix
whose columns form an orthogonal basis for the linear subspace $\{
v \in \C^k \setcond \ones_k^T v = 0 \}$, so that $Q_k^T Q_k = I_{k-1}$
and $\ones_k^T Q_k = 0$.  The characterization of the singular
values of $U$ in Lemma~\ref{lem:sv_char} shows that
\begin{equation}\label{eqn:matrixW}
W =
\begin{bmatrix}
    \sqrt{\frac{n}{m(m+n)}} \ones_m & 0_{m, n-1} & Q_m &
        \frac{1}{\sqrt{m+n}} \ones_m\\
    - \sqrt{\frac{m}{n(m+n)}} \ones_n & Q_n & 0_{n, m-1} &
        \frac{1}{\sqrt{m+n}} \ones_n\\
\end{bmatrix} \in \R^{m+n, m+n}
\end{equation}
is orthogonal, and yields a diagonalization $U^T U = W \Lambda W^T$ with
\begin{equation*}
\Lambda =
    \diag(m+n, \underbrace{m, \dotsc, m}_{n-1},
    \underbrace{n, \dotsc, n}_{m-1}, 0),
\end{equation*}
so that
\begin{equation}
\label{eqn:MPformula}
    U^+ = W \Lambda^+ W^T U^T.
\end{equation}
Since $\ones_m^T Q_m = 0$, the matrix $\hat{Q} = [Q_m,
m^{-\frac{1}{2}} \ones_m]$ is orthogonal and hence $I_m = \hat{Q} \hat{Q}^T
= Q_m Q_m^T + \frac{1}{m} \ones_{m,m}$, which implies that
\begin{equation}
\label{eqn:outprod_Q}
Q_m Q_m^T = I_m - \tfrac{1}{m} \ones_{m,m}.
\end{equation}
%
%
%
Noting that $U^T b = \twobyone{n r}{-m c}$, we compute
from~\eqref{eqn:MPformula}, using~\eqref{eqn:outprod_Q},
\begin{equation*}
\begin{split}
    U^+ b
& = W \Lambda^+ W^T
    \begin{bmatrix}
        nr\\-mc
    \end{bmatrix}
    = W \Lambda^+
    \begin{bmatrix}
        \sqrt{mn(m+n)} \sigma\\
        - m Q_n^T c\\
        n Q_m^T r\\
        0
    \end{bmatrix}
    = W
    \begin{bmatrix}
        \frac{\sqrt{mn}\sigma}{\sqrt{m+n}}\\
        - Q_n^T c\\
        Q_m^T r\\
        0
    \end{bmatrix}\\
& =
    \begin{bmatrix}
         r - \tfrac{m\sigma}{m+n} \ones_m\\
        - c + \frac{n\sigma}{m+n} \ones_n
    \end{bmatrix}.
\end{split}
\end{equation*}
Evaluating the last expression for $U^+b$ takes $\bigo{mn}$
operations.

Finally, with $\twobyone{s}{t}\defby U^+b$ the condition \eqref{eqn:UUplus} 
simply means that $s_i-t_j\neq 0$ for all $i,j$, or, equivalently,
\begin{equation*}
    \left(  r_i - \tfrac{m\sigma}{m+n} \right) -
    \left( -c_j + \tfrac{n\sigma}{m+n} \right) =
    r_i + c_j- \sigma \neq 0
\end{equation*}
for all $i,j$.
\end{proof}

Note that $r$ and $c$ in Theorem~\ref{thm:fast_ls_sol} are the vectors of row and
column means of the matrix $A^{[-1]}$, respectively, while $\sigma$ is the mean of all its
entries. Moreover, for a Cauchy matrix $A=C(s,t)$ the condition \eqref{eqn:means_cond}
reduces to $s_i-t_j\neq 0$ for all $i,j$.

The overall algorithm for computing $U^+b$ according to Theorem~\ref{thm:fast_ls_sol}
is shown in Algorithm~\ref{alg:global}.

\begin{remark}
An explicitly constructed matrix $Q_m$ satisfying the requirements in
the proof of Theorem~\ref{thm:fast_ls_sol} is given
in~\ref{sec:explicit_svd}.  Consequently, a singular value
decomposition of the matrix $U$, based on Lemma~\ref{lem:sv_char},
can be constructed explicitly.
\end{remark}

\begin{algorithm}[t]
\caption{Minimum 2-norm solution of the least squares problem~\eqref{eqn:lin_vector}.
\label{alg:global}}
\begin{algorithmic}[1]
\REQUIRE Matrix $A=[a_{ij}] \in \C^{m,n}$ with $a_{ij} \neq 0$ for all $i,j$.
\ENSURE $\twobyone{s}{t}=U^+\flatten(A^{[-T]})$.
\medskip
\STATE $r      \leftarrow \frac{1}{n}  A^{[-1]} \ones_n$
\STATE $c      \leftarrow \frac{1}{m}  A^{[-T]} \ones_m$
\STATE $\sigma \leftarrow \frac{1}{mn} \ones_m^T A^{[-1]} \ones_n$
\STATE $s \leftarrow r - \frac{m \sigma}{m+n} \ones_m$
\STATE $t \leftarrow \frac{n\sigma}{m+n} \ones_n - c$
$\quad$ \COMMENT{min. 2-norm solution is automatically normalized}
\end{algorithmic}
\end{algorithm}

The following example gives a matrix $A$ with only nonzero entries
for which Algorithm~\ref{alg:global} does not yield Cauchy points.

\begin{example}
Let $0 \neq \alpha \in \C$ and consider the matrix
\begin{equation*}
A =
\begin{bmatrix}
    \frac{1}{\alpha}       & - \frac{1}{\alpha + 2}\\
    - \frac{1}{\alpha - 2} & \frac{1}{\alpha}
\end{bmatrix}\quad\mbox{so that}\quad
A^{[-T]} =
\begin{bmatrix}
    \alpha       & - (\alpha - 2)\\
    - (\alpha + 2) & \alpha
\end{bmatrix},
\end{equation*}
which gives $r = \twobyone{-1}{1}$, $c = \twobyone{-1}{1}$, and $\sigma=0$.
The condition \eqref{eqn:means_cond} does not hold, so that $U^+b$ 
in \eqref{eqn:Upb}, or the output of Algorithm~\ref{alg:global}
applied to $A$, does not give Cauchy points.
\end{example}

In the next section we will derive conditions under which the output
of Algorithm~\ref{alg:global} results in good approximations to
the original problem~\eqref{eqn:nonconvex}.

\subsection{Approximation bounds}
\label{sec:bounds}

For each matrix $A\in \C^{m,n}$ with only nonzero entries a minimum $2$-norm solution
$\hat{z}=\left[\begin{smallmatrix}\hat{s}\\
\hat{t}\end{smallmatrix}\right]$ of the least squares
problem~\eqref{eqn:lin_vector} and hence of~\eqref{eqn:convex} can be
computed in $\bigo{mn}$ operations using Algorithm~\ref{alg:global}.
Of course, without further assumptions we cannot expect that $\hat{z}$
closely approximates the solution of the nonlinear
problem~\eqref{eqn:nonconvex}.
Below we will derive a bound on
$\|A-C(\hat{s},\hat{t})\|_F$, and we will bound
$\|\left[\begin{smallmatrix}s\\ t\end{smallmatrix}\right]-
\left[\begin{smallmatrix}\hat{s}\\
\hat{t}\end{smallmatrix}\right]\|_2$ for a perturbed Cauchy matrix
$A=C(s,t)+N$. In our derivations we will use that the
Hadamard product is submultiplicative with respect to the Frobenius norm, i.e.,
\begin{equation*}
    \norm{ A \odot B }_F \leq \norm{A}_F \norm{B}_F;
\end{equation*}
see, e.g.,~\cite[equation (3.3.5)]{Hor1990}.

Our first result connects the residuals of~\eqref{eqn:nonconvex}
and~\eqref{eqn:convex}.  It shows that if for given vectors ${s}, {t}$
the relative residual of the linearization~\eqref{eqn:convex} is
reasonably small, then ${s}, {t}$ are Cauchy points,
\emph{and} their relative error with respect to the original
problem~\eqref{eqn:nonconvex} is small as well.  Note that the theorem
applies in particular to the output of Algorithm~\ref{alg:global},
since it computes an \emph{optimal} solution for
the linearization~\eqref{eqn:convex}.
Recall that $D({s},{t}) = [{s}_i - {t}_j] \in \C^{m,n}$.

\begin{theorem}
\label{thm:data_res}
Let $A\in\C^{m,n}$ have only nonzero entries and let ${s}\in\C^m$, ${t}\in\C^n$. 
Define the residual matrix corresponding to~\eqref{eqn:convex} by $R\defby
A^{[-1]}-D({s},{t})$.
If
\begin{equation}\label{eqn:res_condition}
    \norm{A \odot R}_M \bydef \beta < 1,
\end{equation}
then
\begin{equation*}
    \min_{i,j} |{s}_i-{t}_j| \ge \norm{A}_M^{-1} (1 - \beta),
\end{equation*}
and hence, in particular, ${s},{t}$ are Cauchy points. Moreover,
\begin{equation}
\label{eqn:beta_bound}
    \frac{\norm{A - C({s}, {t})}_F}{\norm{A}_F}
        \le \frac{\beta}{1 - \beta}.
\end{equation}
\end{theorem}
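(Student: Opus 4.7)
The plan is to translate the hypothesis $\|A \odot R\|_M \le \beta$ into a pointwise statement and exploit the identity that relates $R_{ij}$ to $a_{ij}(s_i - t_j)$. Since $R_{ij} = a_{ij}^{-1} - (s_i - t_j)$, multiplying through by $a_{ij}$ gives $(A \odot R)_{ij} = 1 - a_{ij}(s_i - t_j)$, so the hypothesis reads $|1 - a_{ij}(s_i - t_j)| \le \beta$ for every $i,j$. The reverse triangle inequality then yields $|a_{ij}(s_i - t_j)| \ge 1 - \beta > 0$, and dividing by $|a_{ij}| \le \|A\|_M$ proves the first claim. In particular, no entry $s_i - t_j$ vanishes, so $s, t$ are Cauchy points and $C(s,t)$ is well defined.

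For the residual bound, I would write out $A - C(s, t)$ entry-by-entry, aiming to pull a common factor of $a_{ij}$ outside. Using $1/(s_i - t_j) = a_{ij}/[a_{ij}(s_i - t_j)] = a_{ij}/(1 - (A \odot R)_{ij})$, a short computation gives
\begin{equation*}
    a_{ij} - \frac{1}{s_i - t_j} \;=\; -\,a_{ij} \cdot \frac{(A \odot R)_{ij}}{1 - (A \odot R)_{ij}}.
\end{equation*}
Letting $M \in \C^{m,n}$ denote the matrix with entries $M_{ij} \defby -(A \odot R)_{ij}/[1 - (A \odot R)_{ij}]$, this reads $A - C(s,t) = A \odot M$. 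Since $|(A \odot R)_{ij}| \le \beta$ and $|1 - (A \odot R)_{ij}| \ge 1 - \beta$, every entry of $M$ has modulus at most $\beta/(1 - \beta)$, i.e. $\|M\|_M \le \beta/(1-\beta)$.

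The final step is to bound $\|A - C(s, t)\|_F = \|A \odot M\|_F$. Here the general Hadamard submultiplicative inequality $\|A \odot M\|_F \le \|A\|_F \|M\|_F$ cited just before the theorem is too weak, since $\|M\|_F$ may carry an extraneous factor of order $\sqrt{mn}$. Instead, I would apply the sharper pointwise estimate $|(A \odot M)_{ij}|^2 \le |a_{ij}|^2 \|M\|_M^2$, sum over $i,j$, and take square roots to obtain $\|A \odot M\|_F \le \|A\|_F \|M\|_M \le \|A\|_F \,\beta/(1-\beta)$, which is exactly \eqref{eqn:beta_bound}. The only real obstacle is spotting the factorisation $A - C(s,t) = A \odot M$ with a size factor $A$ and a smallness factor $M$; once that rearrangement is in hand, the rest of the argument is a routine pointwise estimate.
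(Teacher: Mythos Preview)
Your proof is correct and follows essentially the same route as the paper's: both arguments hinge on the entrywise identity $a_{ij}-\tfrac{1}{s_i-t_j}=-a_{ij}\cdot\tfrac{a_{ij}r_{ij}}{1-a_{ij}r_{ij}}$ (your $(A\odot R)_{ij}$ is the paper's $a_{ij}r_{ij}$), bound each entry by $|a_{ij}|\,\beta/(1-\beta)$, and sum. Your observation that the Frobenius--Frobenius Hadamard inequality quoted before the theorem is too crude here, and that one needs the sharper $\|A\odot M\|_F\le\|A\|_F\|M\|_M$, is exactly what the paper does implicitly by working entrywise.
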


\begin{proof}
Let $R=[r_{ij}]$, then for all $i,j$ we get
\begin{equation*}
    \abs{{s}_i - {t}_j}
    = \abs{ \frac{1}{a_{ij}} - r_{ij} }
    = \frac{ \abs{ 1 - a_{ij} r_{ij} } }{ \abs{a_{ij}} }
    \ge \norm{A}_M^{-1} (1 - \beta),
\end{equation*}
which shows the lower bound on $\min_{i,j} |{s}_i-{t}_j|$.

In order to prove \eqref{eqn:beta_bound} we compute
\begin{equation*}
a_{ij} - \frac{ 1 }{ {s}_i - {t}_j }
= a_{ij} - \frac{ 1 }{ \frac{1}{a_{ij}} - r_{ij} }
= a_{ij} \left( 1 - \frac{1}{1 - a_{ij} r_{ij}}  \right)
= a_{ij} \frac{a_{ij} r_{ij}}{1 - a_{ij} r_{ij}},
\end{equation*}
so that
\begin{equation*}
\left|a_{ij} - \frac{ 1 }{ {s}_i - {t}_j }\right|\leq |a_{ij}|\frac{\beta}{1-\beta},
\end{equation*}
giving $\|A-C({s},{t})\|_F\leq \frac{\beta}{1-\beta}\|A\|_F$.
\end{proof}

The condition \eqref{eqn:res_condition} can be written as
\begin{equation}
\label{eqn:maxcompres}
\max_{i,j}
\left|
    \frac{({s}_i-{t}_j)-a_{ij}^{-1}}{a_{ij}^{-1}}
\right|
= \beta < 1.
\end{equation}
In words, the maximal compentwise relative error in the linear
equations~\eqref{eqn:linear} that is made by the vectors
${s},{t}$ has to be smaller than one. This appears to be a
natural and in fact minimal assumption on the output of
Algorithm~\ref{alg:global} so that it gives any useful information
about the optimization problems~\eqref{eqn:nonconvex}
and~\eqref{eqn:convex}.  This maximal componentwise relative error can
be larger than the global relative error $\norm{D({s},
{t}) - A^{[-1]}}_F / \norm{A^{[-1]}}_F$, especially if the entries
of $A$ vary greatly in magnitude.  In that case the
bound~\eqref{eqn:beta_bound} (and the approximation error) is
adversely affected; see Section~\ref{sec:examples} for an example.

In the next result we investigate how closely the output of
Algorithm~\ref{alg:global} approximates the Cauchy points of a
perturbed Cauchy matrix $A$.

\begin{theorem}
\label{thm:cauchy_err}
Let $A = C(\tilde{s},\tilde{t}) + N\in\C^{m,n}$, where
$\tilde{s},\tilde{t}$ are normalized Cauchy points, have only nonzero
entries. Let ${s}\in\C^m,{t}\in\C^n$ be a minimum 2-norm solution of the
least squares problem \eqref{eqn:lin_vector}, i.e., the output of
Algorithm~\ref{alg:global} applied to $A$.  If
\begin{equation}
\label{eqn:err_condition}
    \norm{D(\tilde{s},\tilde{t}) \odot N}_M \bydef \gamma < 1,
\end{equation}
then
\begin{equation}
\label{eqn:gamma_bound}
    \frac{\norm{\left[
        \begin{smallmatrix}
            \tilde{s}\\
            \tilde{t}
        \end{smallmatrix}
    \right]
    - \left[
        \begin{smallmatrix}
            {s}\\
            {t}
        \end{smallmatrix}
    \right]}_2}{\norm{\left[
        \begin{smallmatrix}
            \tilde{s}\\
            \tilde{t}
        \end{smallmatrix}\right]}_2}
    \le \frac{\sqrt{m+n}}{\min \{ \sqrt{m}, \sqrt{n} \}}
        \frac{\gamma}{1 - \gamma}.
\end{equation}
\end{theorem}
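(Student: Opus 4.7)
The plan is to express both $[\tilde{s};\tilde{t}]$ and $[s;t]$ as applications of $U^+$, reduce the question to bounding $\|b-\hat{b}\|_2$ where $\hat{b} = U[\tilde{s};\tilde{t}]$ would be the noise-free right-hand side, and then invoke the singular value information from Lemma~\ref{lem:sv_char}.

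First, because $\tilde{s},\tilde{t}$ are \emph{normalized} Cauchy points, $\ones_{m+n}^T[\tilde{s};\tilde{t}] = 0$, i.e.\ $[\tilde{s};\tilde{t}] \in (\ker U)^{\perp}$ by Theorem~\ref{thm:uniqueness}. Since $\hat{b} \defby U[\tilde{s};\tilde{t}]$ is consistent, we therefore have $[\tilde{s};\tilde{t}] = U^+\hat{b}$. Also $[s;t] = U^+ b$ by definition. Hence
\begin{equation*}
    [s;t] - [\tilde{s};\tilde{t}] = U^+(b - \hat{b}),
\end{equation*}
and using that $\|U^+\|_2 = 1/\sigma_{\min}(U) = 1/\min\{\sqrt{m},\sqrt{n}\}$ from Lemma~\ref{lem:sv_char}, we obtain
\begin{equation*}
    \|[s;t]-[\tilde{s};\tilde{t}]\|_2 \le \frac{1}{\min\{\sqrt{m},\sqrt{n}\}}\,\|b-\hat{b}\|_2.
\end{equation*}

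Second, I bound $\|b-\hat{b}\|_2$ entrywise. Writing $d_{ij} \defby \tilde{s}_i - \tilde{t}_j$ and $e_{ij} \defby d_{ij}n_{ij}$, the assumption~\eqref{eqn:err_condition} says $|e_{ij}| \le \gamma < 1$. Since $a_{ij} = 1/d_{ij} + n_{ij}$, a short algebraic manipulation gives
\begin{equation*}
    \frac{1}{a_{ij}} - d_{ij}
    = \frac{d_{ij}}{1 + e_{ij}} - d_{ij}
    = -d_{ij}\,\frac{e_{ij}}{1+e_{ij}},
\end{equation*}
so $|1/a_{ij}-d_{ij}| \le \tfrac{\gamma}{1-\gamma}|d_{ij}|$. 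Summing over all $i,j$ and taking square roots yields
\begin{equation*}
    \|b-\hat{b}\|_2 \le \frac{\gamma}{1-\gamma}\,\|D(\tilde{s},\tilde{t})\|_F.
\end{equation*}

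Third, I relate $\|D(\tilde{s},\tilde{t})\|_F$ back to $\|[\tilde{s};\tilde{t}]\|_2$. Since the entries of $U[\tilde{s};\tilde{t}]$ are exactly $\tilde{s}_i - \tilde{t}_j$, we have $\|D(\tilde{s},\tilde{t})\|_F = \|U[\tilde{s};\tilde{t}]\|_2 \le \sigma_{\max}(U)\|[\tilde{s};\tilde{t}]\|_2 = \sqrt{m+n}\,\|[\tilde{s};\tilde{t}]\|_2$ using Lemma~\ref{lem:sv_char}. Chaining all three bounds and dividing by $\|[\tilde{s};\tilde{t}]\|_2$ produces precisely \eqref{eqn:gamma_bound}.

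The only step that requires genuine thought is the first one: one has to observe that normalization of $\tilde{s},\tilde{t}$ is exactly what places them in $(\ker U)^\perp$, so that both $[\tilde{s};\tilde{t}]$ and $[s;t]$ lie in the same range of $U^+$ and their difference can be written cleanly as $U^+(b-\hat{b})$. The remaining two steps are routine — a $1/(1+e_{ij})$ expansion and an application of the extremal singular values of $U$ already catalogued in Lemma~\ref{lem:sv_char}.
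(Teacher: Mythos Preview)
Your proof is correct and follows essentially the same route as the paper's: write $[s;t]-[\tilde s;\tilde t]=U^+(b-\hat b)$, bound $\|U^+\|_2$ by $1/\min\{\sqrt m,\sqrt n\}$, estimate the entries of $b-\hat b$ via the identity $1/a_{ij}-d_{ij}=-d_{ij}e_{ij}/(1+e_{ij})$, and finally bound $\|D(\tilde s,\tilde t)\|_F=\|U[\tilde s;\tilde t]\|_2\le\sqrt{m+n}\,\|[\tilde s;\tilde t]\|_2$. The only stylistic difference is that you make explicit why $U^+\hat b=[\tilde s;\tilde t]$ (normalization puts $[\tilde s;\tilde t]$ in $(\ker U)^\perp$), whereas the paper uses this fact without comment.
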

\begin{proof}
Let us denote $C=C(\tilde{s},\tilde{t})$, $N=[n_{ij}]$ and define
\begin{equation*}
B=[b_{ij}]\defby A^{[-T]}-C^{[-T]}.
\end{equation*}
Since ${s},{t}$ is a minimum 2-norm least squares solution
(cf. Theorem~\ref{thm:fast_ls_sol}), we have
\begin{equation*}
    \begin{bmatrix}
        {s}\\
        {t}
    \end{bmatrix}
    = U^+ \flatten(A^{[-T]})
    = U^+ ( \flatten( C^{[-T]} ) + \flatten(B ) )
    =
    \begin{bmatrix}
        \tilde{s}\\
        \tilde{t}
    \end{bmatrix}
    + U^+ \flatten(B).
\end{equation*}
We thus get
\begin{equation*}
\begin{split}
\norm{\begin{bmatrix}\tilde{s}\\ \tilde{t}\end{bmatrix}
    - \begin{bmatrix}{s}\\ {t}\end{bmatrix}}_2
    & = \norm{ U^+ \flatten(B) }_2
    \le \norm{ U^+ }_2 \norm{B}_F\\
    & = \frac{1}{ \min \{ \sqrt{m}, \sqrt{n} \} } \norm{B}_F,
\end{split}
\end{equation*}
where we used Lemma~\ref{lem:sv_char} in the last step.

It remains to bound $\norm{B}_F$.  Note first that for all $i,j$ we have
\begin{equation*}
\begin{split}
    |b_{ji}|
    & = \left|\left( \frac{1}{\tilde{s}_i - \tilde{t}_j} + n_{ij} \right)^{-1}
        - \left( \frac{1}{\tilde{s}_i - \tilde{t}_j} \right)^{-1}\right|
        = \left|(\tilde{s}_i - \tilde{t}_j) \frac{ (\tilde{s}_i - \tilde{t}_j) n_{ij}}{1 + (\tilde{s}_i - \tilde{t}_j) n_{ij}}\right|\\
     & \leq \abs{(\tilde{s}_i - \tilde{t}_j)}
        \frac{\gamma}{1 - \gamma},
\end{split}
\end{equation*}
resulting in
\begin{equation*}
\begin{split}
    \norm{B}_F & \leq \norm{ D(\tilde{s}, \tilde{t}) }_F
        \frac{ \gamma }{ 1 - \gamma }
    = \norm{ U \twobyone{\tilde{s}}{\tilde{t}} }_2
        \frac{ \gamma }{ 1-\gamma }
    \leq \norm{U}_2 \norm{ \twobyone{\tilde{s}}{\tilde{t}} }_2
        \frac{ \gamma }{ 1 - \gamma }\\
    & = \sqrt{m+n}\norm{ \twobyone{\tilde{s}}{\tilde{t}} }_2
        \frac{ \gamma }{ 1 - \gamma },
\end{split}
\end{equation*}
where we have again used Lemma~\ref{lem:sv_char} in the last step.

\end{proof}

The condition \eqref{eqn:err_condition}, i.e.,
\begin{equation*}
    \norm{[(\tilde{s}_i-\tilde{t}_j)n_{ij}]}_M
    = \norm{\left[
        \frac{n_{ij}}{\frac{1}{\tilde{s}_i - \tilde{t}_j}}
    \right]}_M
    < 1,
\end{equation*}
ensures that the maximum (compement wise) relative noise level is
reasonably small. Note also that the constant on the right hand side
of~\eqref{eqn:gamma_bound} is equal to $\sqrt{2}$ when $m=n$.

The two bounds presented in Theorems~\ref{thm:data_res}
and~\ref{thm:cauchy_err} are complementary: On the one hand, 
a small residual~\eqref{eqn:beta_bound} \emph{does not} imply that
Algorithm~\ref{alg:global} recovers nearby Cauchy points of a noisy
Cauchy matrix as in~\eqref{eqn:gamma_bound}. On the other hand,
if Algorithm~\ref{alg:global} recovers nearby Cauchy points of a noisy
Cauchy matrix as in~\eqref{eqn:gamma_bound}, then this \emph{does not}
imply that the residual~\eqref{eqn:beta_bound} is small. Numerical
examples demonstrating this are given in in Section~\ref{sec:examples}.

\begin{remark}
Without further assumptions on $N$ it is not guaranteed that the
output of Algorithm~\ref{alg:global} applied to a noisy Cauchy matrix $A
= C(\tilde{s},\tilde{t}) + N$ (with only nonzero entries)
yields Cauchy points.  However, considering~\eqref{eqn:UUplus}, 
the output $U^+ \flatten(A^{[-T]})$ are indeed Cauchy points 
if $\norm{N}$ is sufficiently small, since the
function
\begin{equation*}
    \{A \in \C^{m,n} \setcond a_{ij} \neq 0\} \rightarrow \C^{mn},
    \quad
    A \mapsto U U^+ \flatten(A^{[-T]}),
\end{equation*}
is continuous, and $U U^+ \flatten(C(\tilde{s},\tilde{t})^{[-T]}) \neq 0$
(componentwise).  We did not attempt to derive a quantitative bound on
$N$ such that $U^+ \flatten(A^{[-T]})$ are guaranteed to be Cauchy
points; see, however, the conditions \eqref{eqn:means_cond}
and \eqref{eqn:res_condition}.
\end{remark}

\subsection{Numerical examples}
\label{sec:examples}

\subsubsection*{Approximation quality of Algorithm~\ref{alg:global}}

We consider the vectors $s \in \C^{200}$ $t \in \C^{100}$, where the
real part consists of equally spaced points in the interval $[-1,1]$,
and imaginary parts set to  $i$ and $-i$, respectively, i.e.,
\begin{equation}
\label{eqn:points_ok}
    s = \text{{\ttfamily linspace}}(-1,1,200) + i, \quad 
    t = \text{{\ttfamily linspace}}(-1,1,100) - i
\end{equation}
in MATLAB syntax.  Consequently, the all the entries of the Cauchy
matrix $C \defby C(s,t)$ have the same magnitude.

\begin{figure}[p]
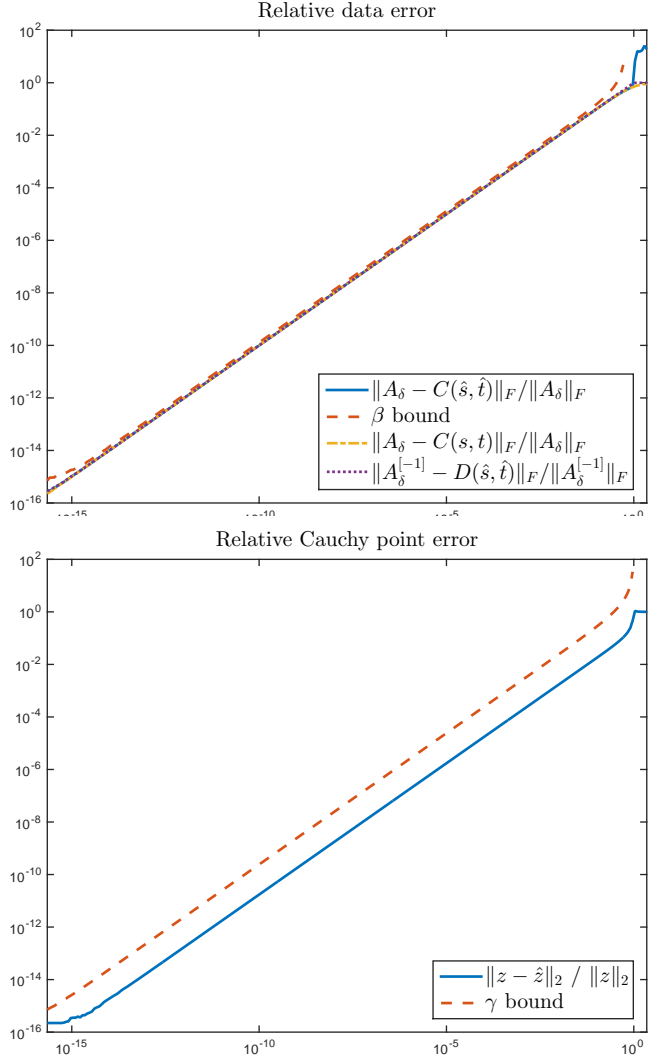

\begin{center}
    \includegraphics[width=0.7\textwidth]{twolines_ok_data}
    \includegraphics[width=0.7\textwidth]{twolines_ok_point}
\end{center}
\caption{Approximation quality of Algorithm~2 for the
data~\eqref{eqn:points_ok}--\eqref{eqn:points_noise}.  {\itshape{Top
picture:}} Relative data approximation error corresponding to the
Cauchy points obtained by Algorithm~\ref{alg:global} (solid blue
line), the original Cauchy points (dash-dotted yellow line), the
bound~\eqref{eqn:beta_bound} (dashed red line), and the relative
residual of the linearized problem~\eqref{eqn:convex} (dotted purple
line).  All four lines are visually almost indistinguishable.
{\itshape Bottom picture:}  Relative Cauchy points approximation error
corresponding to the Cauchy points obtained by
Algorithm~\ref{alg:global} (solid blue line), and the
bound~\eqref{eqn:gamma_bound} (dashed red line).
\label{fig:twolines_ok}}
\end{figure}

\begin{figure}[p]
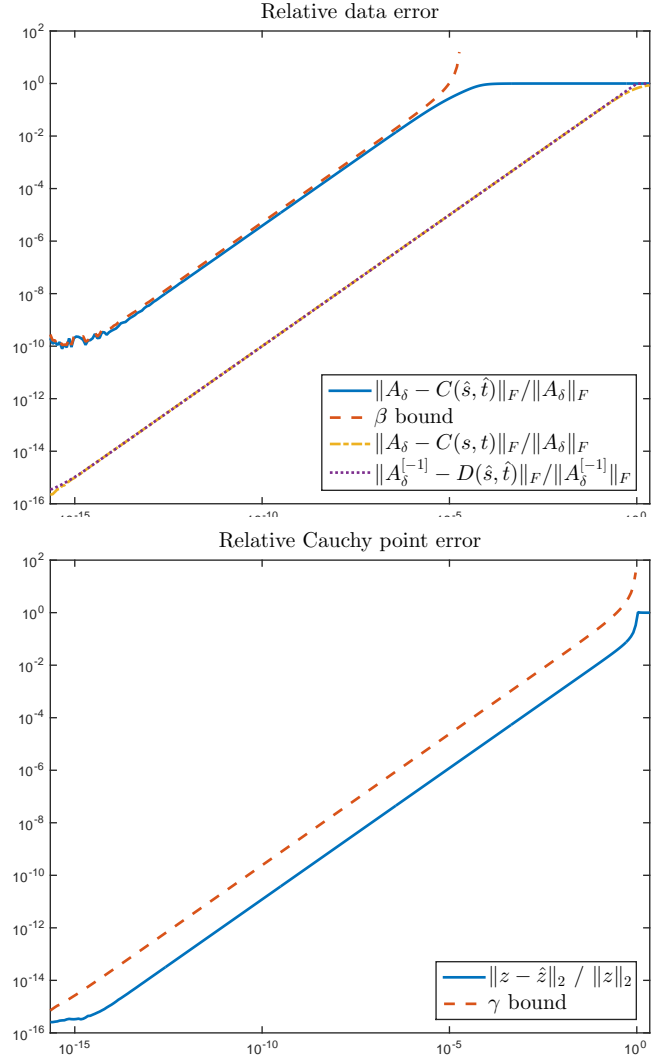

\begin{center}
    \includegraphics[width=0.7\textwidth]{twolines_bad_data}
    \includegraphics[width=0.7\textwidth]{twolines_bad_point}
\end{center}
\caption{Approximation quality of Algorithm~2 for the
data~\eqref{eqn:points_noise}--\eqref{eqn:points_bad}.  
Notation as in Figure~\ref{fig:twolines_ok}.
\label{fig:twolines_bad}}
\end{figure}

In order to study the approximation quality of
Algorithm~\ref{alg:global}, we perturb $C$ by some noise matrix
$N_\delta$ for a series of increasing noise levels~$\delta \in
[10^{-16},1]$. We consider a random matrix $N \in \C^{200,100}$
(generated by MATLAB's {\ttfamily randn} function for its real and
imaginary parts) and set
\begin{equation}
\label{eqn:points_noise}
    A_\delta \defby C + N_\delta,\quad \mbox{where}\quad
    N_\delta \defby \delta * \big(N \odot \abs{N^{[-1]}} \odot \abs{C}\big).
\end{equation}
Thus, the relative perturbation of $C$ by $N_\delta$ in each component 
is exactly $\delta$ (compare~\eqref{eqn:err_condition}). We apply
Algorithm~\ref{alg:global} to each such matrix $A_\delta$, and we denote 
the output by $\hat{z} \defby \twobyone{\hat{s}}{\hat{t}} \defby U^+
\flatten(A_\delta^{[-T]})$.

Figure~\ref{fig:twolines_ok} (top) shows, for each noise level
$\delta$, the relative approximation error $\frac{\norm{A_\delta -
C(\hat{s}, \hat{t})}_F}{\norm{A_\delta}_F}$.  We also plot the
bound~\eqref{eqn:beta_bound} and the relative error
$\frac{\norm{A_\delta - C(s, t)}_F}{\norm{A_\delta}_F}$ made by the
original Cauchy points. We observe that the output $\hat{s}, \hat{t}$
of Algorithm~\ref{alg:global} yields an approximation of the given
data matrix $A_\delta$ by a Cauchy matrix $C(\hat{s}, \hat{t})$ with
approximation error linear in the noise level, and that this
approximation quality is on par with the original Cauchy points.
Moreover, the bound~\eqref{eqn:beta_bound} matches the true residual
rather well.

The computed Cauchy points $\hat{s}, \hat{t}$ are, however, different
from the original ones.  Figure~\ref{fig:twolines_ok} (bottom) shows
the relative recovery error $\norm{z - \hat{z}}_2 / \norm{z}_2$, where
$z \defby \twobyone{s}{t}$.  As for the data approximation error, the
recovery error behaves linearly in the noise level.

We now study the effect of increasing the range of magnitudes in the
coefficients of the Cauchy matrix $C(s,t)$ by setting the imaginary
parts of the vectors $s$ and $t$ to $10^{-6}i$ and $-10^{-6}i$
(instead of $i$ and $-i$), respectively, i.e.,
\begin{equation}
\label{eqn:points_bad}
    s = \text{{\ttfamily linspace}}(-1,1,200) + 10^{-6} i, \quad 
    t = \text{{\ttfamily linspace}}(-1,1,100) - 10^{-6} i.
\end{equation}

Figure~\ref{fig:twolines_bad} (top) shows that this change leads to an
increase of the approximation error $\norm{A_\delta - C(\hat{s},
\hat{t})}_F / \norm{A_\delta}_F$ by about six orders of magnitude,
while the global approximation error of the linearization behaves
nicely with respect to the noise level; cf.~\eqref{eqn:maxcompres} and
corresponding discussion. On the other hand, the relative error of the
recovered Cauchy points $\norm{z - \hat{z}}_2 / \norm{z}_2$ is largely
unaffected by this change; see Figure~\ref{fig:twolines_bad} (bottom).

Notice also the ``wiggly'' behaviour of the blue and red line in
Figure~\ref{fig:twolines_bad} (top); this is due to roundoff error in
computing the row and column means in Algorithm~\ref{alg:global}.
Using a multiply compensated summation~\cite{Priest1992} would yield a
more stable behaviour (at a $\log(mn)$ factor higher operation count).

\begin{figure}[p]
\begin{center}
    \includegraphics[width=0.7\textwidth]{tbtresconst_data}\\
    \includegraphics[width=0.7\textwidth]{tbtresconst_point}
\end{center}
\caption{Relative data approximation error (top) and Cauchy point
error (bottom) for data
$A_\delta = C([1;-1], [i;-i]) + \delta [1, -1; -1, 1]$. (Values
smaller than the machine precision $\epsilon$ have been set
to $\epsilon$ for cleaner presentation.)
\label{fig:tbtresconst}}
\end{figure}

\begin{figure}[p]
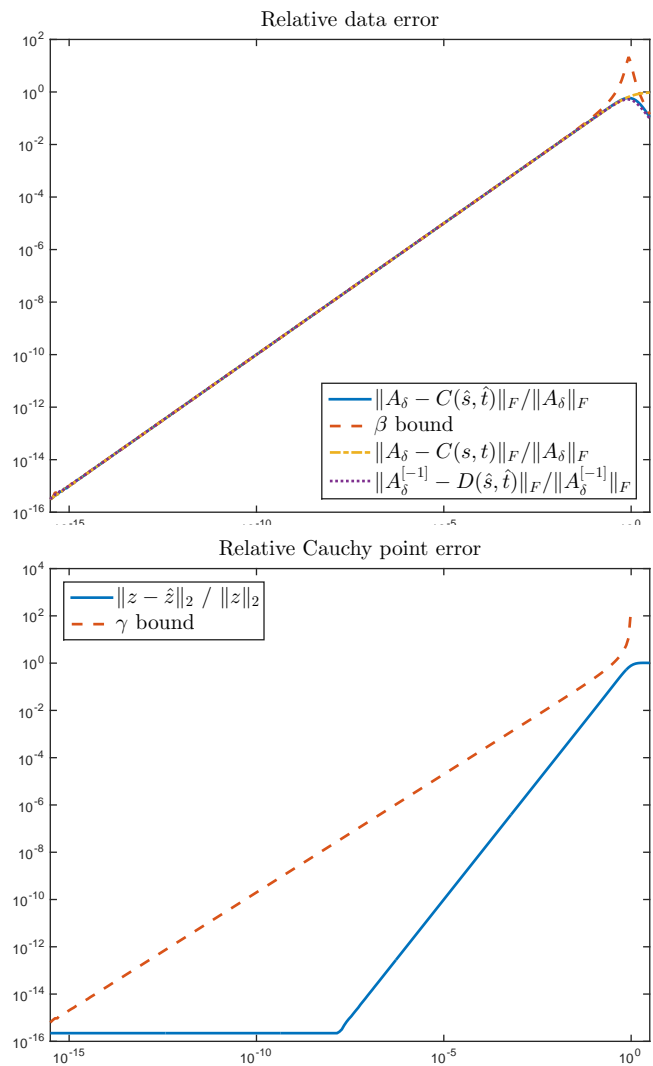

\begin{center}
    \includegraphics[width=0.7\textwidth]{tbtpointconst_data}\\
    \includegraphics[width=0.7\textwidth]{tbtpointconst_point}
\end{center}
\caption{Relative data approximation error (top) and Cauchy point
error (bottom) for data
$A_\delta = C([1;-1], [i;-i]) + \delta [-1, -1; -1, -1]$.  (Values
smaller than the machine precision $\epsilon$ have been set
to $\epsilon$ for cleaner presentation.)
\label{fig:tbtpointconst}}
\end{figure}

\subsubsection*{Complementarity of the bounds in Theorems~\ref{thm:data_res}
and~\ref{thm:cauchy_err}}

We consider the $2\times 2$ Cauchy matrix $C(s,t)$ having the (normalized) 
Cauchy points
\begin{equation*}
    s = \begin{bmatrix} 1\\-1 \end{bmatrix}
    \quad \text{and} \quad
    t = \begin{bmatrix} i\\-i \end{bmatrix}.
\end{equation*}

Figure~\ref{fig:tbtresconst} shows the same quantities as in the
previous examples for the matrices
\begin{equation*}
    A_\delta = C(s,t) + \delta
    \begin{bmatrix}
        1  & -1\\
        -1 &  1
    \end{bmatrix},
\end{equation*}
where $\delta$ ranges from $10^{-16}$ to $1.0$. 
While the error made in the recovered Cauchy points increases linearly in $\delta$, 
the data approximation residual $\frac{\norm{A_\delta - C(\hat{s},
\hat{t})}_F}{\norm{A_\delta}_F}$ remains on the machine precision level
until $\delta \approx 10^{-5}$.  A computation shows that for this
particular choice of $s$, $t$ and $N$, the residual of the
linearization corresponding to the solution
$\twobyone{\hat{s}}{\hat{t}} = U^+ \flatten(A_\delta^{[-T]})$ has the
form
\begin{equation*}
    R=A_\delta^{[-1]} - D(\hat{s}, \hat{t}) =
    \begin{bmatrix}
        \frac{4 \delta^3}{1 + 4 \delta^4} & \frac{-4 \delta^3}{1 + 4 \delta^4}\\
        \frac{-4 \delta^3}{1 + 4 \delta^4} & \frac{4 \delta^3}{1 + 4 \delta^4}
    \end{bmatrix},
\end{equation*}
so that $\beta = \norm{A_\delta \odot R}_M$
(see~\eqref{eqn:res_condition}) is smaller than $\epsilon$ 
until $\delta\approx 10^{-5}$.  Consequently, the
bound~\eqref{eqn:beta_bound} implies that the data approximation
residual is about the same size. 

More generally, when for a Cauchy matrix $C(s,t)$ a perturbation $N$ 
is such that  
\begin{equation*}
    \flatten( (C(s,t) + N)^{[-T]} ) \in \image(U)
\end{equation*}
(see~\eqref{eqn:matrixwise}), the data approximation residual will be
zero, while the distance of $\twobyone{\hat{s}}{\hat{t}} = U^+
\flatten(A_\delta^{[-T]})$ to the original Cauchy points
$\twobyone{s}{t}$ can become arbitrarily large.

Using the same Cauchy points as above we now consider a perturbation of the form
\begin{equation*}
    A_\delta = C(s,t) - \delta
    \begin{bmatrix}
        1 & 1\\ 1 & 1
    \end{bmatrix}.
\end{equation*}
The resulting errors are shown in Figure~\ref{fig:tbtpointconst}.  Now
the data approximation error behaves linearly in $\delta$, but the
Cauchy points $s,t$ are exactly recovered up to $\delta \approx
10^{-8}$.  A computation shows that the output of
Algorithm~\ref{alg:global} applied to $A_\delta$ is
\begin{equation*}
    \begin{bmatrix}
        \hat{s} \\ \hat{t}
    \end{bmatrix}
    = U^+ \flatten(A_\delta^{[-T]})
    = \frac{1}{1 + 4 \delta^4} \begin{bmatrix}
    1  - 2 \delta^2  - 2 \delta^3\\
    -1 + 2 \delta^2  - 2 \delta^3\\
    i  + 2i \delta^2 + 2 \delta^3\\
    -i - 2i \delta^2 + 2 \delta^3
    \end{bmatrix},
\end{equation*}
so that, numerically, the recovered Cauchy points are the original
ones until $\delta^2 \approx \epsilon$.  

More generally, for a Cauchy matrix $C(s,t)$ a perturbation
$N$ is such that 
\begin{equation*}
    \flatten( C(s,t)^{[-T]} - (C(s,t) + N)^{[-T]}  )
    \in \image(U)^\perp,
\end{equation*}
then Algorithm~\ref{alg:global} will recover $\twobyone{s}{t}$ exactly,
while the data approximation error can become arbitrarily large.

\section{Concluding remarks}
\label{sec:conclusion}

We presented an efficient algorithm for the approximation of a given
matrix with a Cauchy matrix.  Our approach for solving the
approximation problem is based on the solution of a linear 
least squares problem based on the explicit construction of 
the pseudoinverse of a structured matrix. It would be very interesting 
to investigate whether similar
approximation algorithms can be derived for other displacement
structured matrices like generalized Cauchy matrices or Cauchy-like
matrices; see, e.g.,~\cite{AricoRodriguez2010,Poloni2010,Gu1998}.

\paragraph*{Acknowledgements} The work of R.~Luce was partially
supported by Deutsche Forschungsgemeinschaft, cluster of excellence
``UniCat''. We thank the two anonymous referees for their constructive
comments which helped us to shorten and improve the presentation,
in particular in Theorem~\ref{thm:fast_ls_sol}.

\appendix

\section{An explicit SVD of $U$}
\label{sec:explicit_svd}

\begin{lemma}
Let $m > 1$, set $n \defby m-1$ and $\nu_j \defby \sqrt{1 +
\tfrac{1}{j}}$ for $j=1,\dots,n$. Then the unreduced upper Hessenberg matrix
\begin{equation*}
Q_m \defby
\begin{bmatrix}
    \nu_1 &
        2 \nu_2 &
        \hdots &
        n \nu_n\\
    - \nu_1 &
        2 \nu_2 &
        \hdots &
        n \nu_n\\
    {} &
        - \nu_2 &
        \ddots &
        \vdots\\
    {} &
        {} &
        \ddots &
        n \nu_n\\
    {} &
        {} &
        {} &
        - \nu_n\\
\end{bmatrix}^{\left[-1\right]} \in \R^{m,m-1}
\end{equation*}
satisfies $Q_m^T Q_m = I_{m-1}$ 
and $\ones_m^TQ_m=0$. 
In particular, the columns of $Q_m$ form an orthogonal basis for
the subspace $\{v \in \C^{m} \setcond 1_m^T v = 0 \}$.
\end{lemma}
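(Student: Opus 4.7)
The plan is to read off the entries of $Q_m$ explicitly from the displayed matrix, then verify the two claims by direct computation with the supports of individual columns. Parsing the picture, the $j$-th column of $Q_m$ (for $j = 1, \dots, n = m-1$) has the entry $1/(j\nu_j)$ in rows $1, \dots, j$, the entry $-1/\nu_j$ in row $j+1$, and zeros in the remaining rows. (The blank positions below the subdiagonal are zero both before and after the elementwise inversion.) This is all the structural information I need.

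First I would check that each column has unit Euclidean norm. The squared norm of column $j$ is
\begin{equation*}
j\cdot \frac{1}{(j\nu_j)^2} + \frac{1}{\nu_j^2}
= \frac{j+1}{j\,\nu_j^2}
= 1,
\end{equation*}
where the last equality uses $\nu_j^2 = 1 + 1/j = (j+1)/j$. Next I would check orthogonality of any two distinct columns. For $j < k \le n$, the support of column $j$ is $\{1,\dots,j+1\} \subseteq \{1,\dots,k\}$, and on the latter set column $k$ is constant and equal to $1/(k\nu_k)$. Hence the inner product of columns $j$ and $k$ is
\begin{equation*}
\frac{1}{k\nu_k}\!\left( j\cdot \frac{1}{j\nu_j} - \frac{1}{\nu_j}\right) = 0.
\end{equation*}
Together with the first step this establishes $Q_m^T Q_m = I_{m-1}$.

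For $\ones_m^T Q_m = 0$ I would simply sum the entries of each column:
\begin{equation*}
\sum_{i=1}^{m} (Q_m)_{ij} \;=\; j\cdot \frac{1}{j\nu_j} \;-\; \frac{1}{\nu_j} \;=\; 0.
\end{equation*}
The final spanning statement is then immediate: the $m-1$ orthonormal columns of $Q_m$ all lie in the $(m-1)$-dimensional hyperplane $\{v \in \C^m \setcond \ones_m^T v = 0\}$, so they form an orthonormal basis of it.

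The only conceptual issue is reading the entries of $Q_m$ off the displayed Hessenberg pattern correctly (in particular noting that $\ddots$, $\vdots$, and the blanks encode the uniform row-wise value $j\nu_j$ above the subdiagonal and zero below it). Once those entries are written down, the verification requires nothing beyond the identity $\nu_j^2 = (j+1)/j$, and no step poses any real obstacle.
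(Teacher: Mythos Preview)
Your proof is correct and follows essentially the same approach as the paper: read off the column entries explicitly and verify $Q_m^TQ_m=I_{m-1}$ and $\ones_m^TQ_m=0$ by direct computation of the inner products, using $\nu_j^2=(j+1)/j$. The only cosmetic difference is that in the orthogonality step you factor out the constant value of the longer column on the support of the shorter one, whereas the paper writes out the sum directly; the computations are otherwise identical.
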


\begin{proof}
Let $q_i, q_j$ be the $i$th and $j$th column of $Q$, respectively, and
assume without loss of generality that $1 \le i < j \le n$.  In order
to show $Q_m^T Q_m = I_{m-1}$ we compute
\begin{equation*}
q_i^T q_j
= \sum_{k=1}^i \frac{1}{i j \nu_i \nu_j} - \frac{1}{j \nu_i \nu_j}
= \frac{1}{j \nu_i \nu_j} - \frac{1}{j \nu_i \nu_j}
= 0,
\end{equation*}
and for $1 \le j \le n$,
\begin{equation*}
q_j^T q_j
= \sum_{k=1}^j \frac{1}{j^2 \nu_j^2} + \frac{1}{\nu_j^2}
= \frac{1}{j \nu_j^2} + \frac{1}{\nu_j^2}
= \frac{1}{1+j} + \frac{j}{j+1}
= 1.
\end{equation*}
The equation $\ones_m^TQ_m=0$ follows from $\ones_{m}^T q_j =
\frac{j}{j\nu_j} - \frac{1}{\nu_j} = 0$.

\end{proof}

Using the explicitly constructed matrix $Q_m$ in~\eqref{eqn:matrixW}, 
we obtain an explicit matrix of right singular vectors of the matrix $U$ 
in \eqref{eqn:matrixwise}. Orthogonal bases for the eigenspaces of $UU^T$ 
can also be obtained using $Q_m$ as a building block
(cf. the second part of Lemma~\ref{lem:sv_char}), so that a complete 
SVD of $U$ can be explicitly constructed.

From the special structure of $Q_m$ it is not difficult to see that
matrix vector products with $Q_m$ and $Q_m^T$ can be evaluated in
$\bigo{m}$ operations. Consequently, matrix-vector products with the 
SVD-factors of $U$ can be carried out in constant time per vector
component of the output.

\opt{arxiv}{
\bibliographystyle{plain}
}
\opt{els}{
\bibliographystyle{elsarticle-num}
\section*{References}
}

\bibliography{cauchy_approx}

\end{document}